
\documentclass[letterpaper, 10 pt, conference]{ieeeconf}  

\IEEEoverridecommandlockouts                              

\overrideIEEEmargins                                      



\usepackage{amssymb}
\usepackage{amsthm}
\usepackage{graphics} 
\usepackage{float}
\usepackage{epsfig} 
\usepackage{times} 
\usepackage{amsmath, bm} 
\usepackage{amsfonts}  
\usepackage{comment}
\usepackage{xcolor}
\usepackage{cite}
\usepackage{hyperref}
\usepackage[english]{babel}

\newcommand{\cC}{{\cal C}}

\newcommand{\cD}{{\cal D}}
\newcommand{\cL}{{\cal L}}

\newcommand{\mC}{{\mathbb C}}

\newcommand{\mR}{{\mathbb R}}

\newcommand{\bU}{{\mathbf U}}
\newcommand{\mU}{{\mathbb U}}
\newcommand{\bG}{{\mathbf G}}
\newcommand{\bD}{{\mathbf D}}

\newcommand{\bs}{{\mathbf s}}

\newcommand{\bP}{{\mathbf P}}

\newcommand{\bM}{{\mathbf M}}

\newcommand{\by}{{\mathbf y}}
\newcommand{\bff}{{\mathbf f}}

\newcommand{\bw}{{\mathbf w}}
\newcommand{\bx}{{\mathbf x}}

\newcommand{\bC}{{\boldsymbol{\mathcal C}}}

\newcommand{\bX}{{\mathbf X}}

\newcommand{\bA}{{\mathbf A}}
\newcommand{\bB}{{\mathbf B}}

\newcommand{\bd}{{\mathbf d}}
\newcommand{\bz}{{\mathbf z}}

\newcommand{\bq}{{\mathbf q}}

\theoremstyle{plain}
\newtheorem{theorem}{\textbf{Theorem}}
\newtheorem{definition}{\textbf{Definition}}

\newtheorem{corollary}{\textbf{Corollary}}
\theoremstyle{definition}
\newtheorem{assumption}{Assumption}
\newtheorem{remark}{\textbf{Remark}}

\title{\LARGE \bf
Path-Integral Formula for Computing Koopman Eigenfunctions 
}
\author{Shankar A. Deka$^{*}$, Sriram S.K.S. Narayanan$^{*}$ and Umesh Vaidya
\thanks{* These authors contributed equally to this paper.}
\thanks{UV will like to acknowledge the financial support from NSF grant 1932458. Shankar A. Deka is with the Decision
and Control Systems division in the Dept. of Electrical Engr.
and Computer Science, KTH Royal Institute of Technology, Sweden. Email: deka@kth.se. Sriram S.K.S Narayanan and Umesh Vaidya are with the Dept. of Mechanical Engr., Clemson University, Clemson SC. Email: (sriramk,uvaidya)@clemson.edu
}
}

\begin{document}
\maketitle
\thispagestyle{empty}
\pagestyle{empty}



\begin{abstract}
The paper is about the computation of the principal spectrum of the Koopman operator (i.e., eigenvalues and eigenfunctions). The principal eigenfunctions of the Koopman operator are the ones with the corresponding eigenvalues equal to the eigenvalues of the linearization of the nonlinear system at an equilibrium point. 
The main contribution of this paper is to provide a novel approach for computing the principal eigenfunctions using a path-integral formula. Furthermore, we provide conditions based on the stability property of the dynamical system and the eigenvalues of the linearization towards computing the principal eigenfunction using the path-integral formula.
Further, we provide a Deep Neural Network framework \textcolor{black}{that utilizes our proposed path-integral approach} for eigenfunction computation in high-dimension systems. Finally, we present simulation results for the computation of principal eigenfunction and demonstrate their application for determining the stable and unstable manifolds and constructing the Lyapunov function. 
\end{abstract}

\section{Introduction}
The Koopman operator theory is emerging as a powerful tool for the analysis and synthesis of nonlinear systems \cite{mezic2020spectrum,huang2020data,huang2019optimal,huang2022convex,korda2018convergence,moyalan2023data,sinha2019computation}. The linear lifting of a nonlinear system provided by the Koopman operator in the space of functions is successfully exploited for control design \cite{korda2018linear,vaidya_Koopmanspectrumcdc2022}, prediction \cite{kaiser2018sparse,brunton2016koopman}, and uncertainty propagation \cite{matavalam2020data,sinha2016operator} in a dynamical system. However, the spectral properties, i.e., the eigenvalues and eigenfunctions, of the Koopman operator still need to be explored, especially for control \cite{korda2020optimal,vaidya_Koopmanspectrumcdc2022}.


In this paper, we are specifically interested in identifying the principal eigenfunctions of the Koopman operator. The principal eigenfunctions are associated with the eigenvalues of the linearization of the nonlinear system at an equilibrium point. The principal eigenfunctions provide a powerful tool for analyzing and synthesizing controllers for nonlinear systems. These eigenfunctions can be used as a change of coordinates for the linear representation of a nonlinear system over a large region of the state space \cite{lan2013linearization,mezic2020spectrum}. The extent of validity of these eigenfunctions determines the size of the domain over which the linear representation is valid. For example, in a system with a stable equilibrium point, these eigenfunctions are well defined in the domain of attraction of the equilibrium point. The zero-level curves of the eigenfunction are used to identify the stable and unstable manifolds of the dynamical system. More recently, the connection between the principal eigenfunctions of the Koopman operator and the solution of the Hamilton Jacobi equation has been established \cite{vaidya_Koopmanspectrumcdc2022}. This connection provides a systematic approach for formulating and solving various control problems, including optimal control, robust control, and input-output gain analysis of a nonlinear system \cite{SarangACC2023}. For all these reasons, it becomes imperative to develop systematic and robust computational methods for determining the principal spectrum of the Koopman operator. In \cite{mauroy2016global}, Taylor and Bernstein's polynomials were used to approximate the eigenfunctions. To reduce the computation cost for high dimensional systems, \cite{schlosser2022sparsity} proposed to decompose the system as a set of interconnected systems and exploit its sparsity structure. A convex formulation to approximate the principal eigenfunctions is provided in \cite{umathe2022reachability}. However, these methods cannot be easily extended to a general high-dimensional system.

The main contribution of this paper is to provide a novel approach for the computation of the principal eigenfunctions of the Koopman operator. The approach relies on decomposing principal eigenfunctions into linear and purely nonlinear parts. The linear part of the eigenfunction is obtained as the left eigenvector of the linearization of system dynamics at the equilibrium point. The nonlinear part is shown to satisfy a linear partial differential equation (PDE). The solution of this linear PDE is obtained using a path-integral formulation. In particular, the value of the eigenfunction at any given point $\bx_0$, is obtained by integrating a known function along the system trajectory forward in time with  $\bx_0$ as the initial state. We provide conditions based on the stability properties of the system for the path-integral formula to work. The path-integral approach does not involve a choice of basis function, making it attractive for complex systems. Furthermore, we present a DNN framework to approximate the solution of the PDE for high-dimensional systems. Finally, we demonstrate the application of the developed framework for the computation of stable/unstable manifolds and the construction of Lyapunov functions. 

\section{Preliminaries and Notations}
Consider the continuous-time dynamical system 
\begin{align}
\dot \bx=\bff(\bx),\;\;\;\bx\in \bX\subset \mR^n. \label{sys}
\end{align}
The following assumption is made on the vector field in the rest of the paper.

\begin{assumption}\label{assume_sys}
We assume that the vector field $\bff(\bx)$ is at least $\cC^2(\bX)$ \textcolor{black}{(twice continuously differentiable)} and $\bx=0$ is a hyperbolic equilibrium point of the system, i.e., $\bA:=\frac{\partial \bff}{\partial \bx}(0)$ has no eigenvalues on the imaginary axis. 
\end{assumption}
 
\begin{definition} [Koopman Operator] \textcolor{black}{Let $\bs_t(\bx)$ be the solution of the dynamical system  (\ref{sys}) at time $t$ starting from the initial condition $\bx$.} The Koopman operator $\mathbb{U}_t:\cL_{\infty}(\bX)\to\cL_\infty(\bX)$ associated with the dynamical system (\ref{sys}) is defined as 
\begin{align}\label{eq:koop_def}
[\mathbb{U}_t \psi](\bx)=\psi(\bs_t(\bx)),
\end{align}
where $\psi$ \textcolor{black}{(commonly referred to as an observable function) is} defined on $\cL_\infty(\bX)$, which is the space of essentially bounded functions on $\bX$. The infinitesimal generator ${\cal K}_{\bf f}$ for the Koopman operator is given by
\begin{align}
\lim_{t\to 0}\frac{(\mathbb{U}_t-I)\psi}{t}=\frac{\partial \psi}{\partial \bx}{\bf f}(\bx)=:{\cal K}_{\bf f} \psi,\;\;t\geq 0. \label{K_generator}
\end{align}
\end{definition}

\begin{definition}[Eigenvalues and Eigenfunctions] A function $\phi(\bx)\in \cC^1(\bX)$  is said to be an eigenfunction of the Koopman operator associated with eigenvalue $\lambda$ if
\begin{align}
[\mU_t \phi](\bx)=e^{\lambda t}\phi(\bx),\;\;\textcolor{black}{t\geq 0}.\label{eig_koopman}
\end{align}
Using the Koopman generator, equation (\ref{eig_koopman}) can be written as 
\begin{align}
    \textcolor{black}{{\cal K}_{\bf f}\phi =} \frac{\partial \phi}{\partial \bx}{\bf f}(\bx)=\lambda \phi(\bx).\label{eig_koopmang}
\end{align}
\end{definition}

\noindent Notice that equations (\ref{eig_koopman}) and (\ref{eig_koopmang}) \textcolor{black}{provide a ``global'' definition of Koopman spectrum in the sense that it holds for all $t\in [0,\infty)$ and all $x\in \bX$}. However, the spectrum can be defined over finite time or over a subset of the state space and is of interest to us in this paper. Furthermore, in this paper, we are also interested in computing the spectrum associated with the eigenvalues of the linearization of the nonlinear system at an equilibrium point.  
\begin{definition}[Open Eigenfunction \cite{mezic2020spectrum}]
 Let $\phi: \bC\to \mC$, where $\bC\subset \bX$ is not an invariant set. Let $\bx\in  \bC$, and
$\tau \in (\tau^-(\bx),\tau^+(\bx))= I_\bx$, a connected open interval such that \textcolor{black}{$\bs_\tau (\bx) \in \bC$} for all  $\tau \in I_\bx$.
If
\[[\mU_\tau \phi](\bx) = \phi(\bs_\tau(\bx)) =e^{\lambda \tau}  \phi(\bx)\;\;\;\;\forall \tau \in I_\bx, \]
then \textcolor{black}{$\phi$} is called \textcolor{black}{an} open eigenfunction of the Koopman operator family $\bU_t$, for $t\in \mR$ with eigenvalue $\lambda$. 
\end{definition}

If $\bC$ is a proper invariant subset of $\bX$ in which case $I_\bx=\mR$ for every $\bx\in \bC$, then $\phi$ is called \textcolor{black}{a} subdomain eigenfunction. If $\bC=\bX$, then $\phi$ will be \textcolor{black}{an} ordinary eigenfunction associated with eigenvalue $\lambda$ as defined in (\ref{eig_koopman}). When $\bC$ is open, the open eigenfunctions as defined above can be extended from $\bC$ to a \textcolor{black}{larger set which is the backward-reachable from the closure of $\bC$}, based on the construction procedure outlined in  \cite[Definition 5.2, Lemma 5.1] {mezic2020spectrum}. Following Assumption \ref{assume_sys}, let $\cD$ be the domain of attraction of the equilibrium point at the origin. Our interest is in computing the Koopman eigenfunctions which are defined over this domain $\cD$. Furthermore, these eigenfunctions are associated with the eigenvalues \textcolor{black}{of the dynamic matrix $\bA$ of the linearized system around the equilibrium $\bx=0$}. These principal eigenfunctions are connected to the diffeomorphism as established in the famous Hartman Grobman theorem, which \textcolor{black}{transforms the nonlinear system into} a linear system in a small neighborhood around the equilibrium point \cite{arnold2012geometrical,lan2013linearization}. In fact, these eigenfunctions can be essentially viewed as the extension of the Hartman Grobman diffeomorphism from the local neighborhood around the origin to the entire domain of attraction $\cD$ \cite[Theorem 5.6]{mezic2020spectrum}.

\section{Main Results}

Following Assumption \ref{assume_sys}, we can write the system dynamics (\ref{sys}) as 
\begin{align}
\dot \bx=\bff(\bx)=\bA \bx+\bff_n(\bx),\label{eq:dynamics}
\end{align}
\textcolor{black}{where} $\bA\bx:=\frac{\partial \bff}{\partial \bx}(0)\bx$ is the linear part  and $\bff_n(\bx):=\bff(\bx)-\bA \bx$ is the  purely nonlinear part of the vector field $\bff(\bx)$. Let $\lambda$ be \textcolor{black}{an} eigenvalue of the linearization, i.e., $\bA$, and let $\varphi_\lambda(\bx)$ be the  eigenfunction associated with the eigenvalue $\lambda$ \textcolor{black}{(such eigenfunctions are called principal eigenfunctions)}.
Similar to the system decomposition into linear and nonlinear parts, the principal eigenfunction, $\varphi_\lambda(\bx)$, also admits a decomposition into linear and nonlinear terms as follows:
\begin{align}
\varphi_\lambda(\bx)=\bw_\lambda^\top \bx+h_\lambda(\bx), \label{princ_eig}
\end{align}
where $\bw^\top\bx$ is the linear part and $h_\lambda(\bx)$ is the purely nonlinear term and hence satisfies $\frac{\partial h}{\partial \bx}(0)=0$.
Substituting (\ref{princ_eig}) in  equation (\ref{eig_koopmang})  and comparing terms, we obtain
\begin{align}\bw_\lambda^\top \bA=\lambda \bw_\lambda^\top,\label{linear_eig}
\end{align}
i.e., $\bw_\lambda$ is the left eigenvector of $\bA$ with eigenvalue $\lambda$. Similarly, the nonlinear part, \textcolor{black}{$h_\lambda(\bx)$}, of the eigenfunction satisfies the following linear partial differential equation (PDE) 
\begin{align}\frac{\partial h_\lambda}{\partial \bx}\bff(\bx)-\lambda h_\lambda(\bx)+\bw_\lambda^\top \bff_n(\bx)=0.\label{pde}
\end{align}
The main results of this section on the computation of principal eigenfunctions of the Koopman \textcolor{black}{operator present} an approach for solving equation (\ref{pde}).  We present two different approaches for the computation of the nonlinear part of the principal eigenfunctions. Our first approach relies on the path-integral formula for the computation of principal eigenfunctions. Our second approach relies on the use of a Deep Neural Network for solving the linear PDE (\ref{pde}).

\subsection{Path-Integral Approach for Computation}
Our first results on the path-integral approach for eigenfunction computation provide a solution formula for the linear PDE  (\ref{pde}) using the method of characteristics. 

\begin{theorem}\label{theorem_main} The solution formula for the first order linear PDE (\ref{pde}) can be written as 
\begin{align}
h_\lambda(\bx)=e^{-\lambda t} h_\lambda(\bs_t(\bx))+\int_0^t e^{-\lambda t} \bw^\top_\lambda \bff_n(\bs_\tau(\bx))d\tau,\label{pde_soultion}
\end{align}
where $\bs_t(\bx)$ is the solution of the system (\ref{eq:dynamics}). 
\end{theorem}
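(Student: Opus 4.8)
The plan is to read equation (\ref{pde}) as a first-order linear PDE whose characteristic curves are exactly the trajectories $\bs_t(\bx)$ of (\ref{eq:dynamics}), and to integrate the PDE along a characteristic. Fix $\bx\in\cD$. Since $\cD$ is the domain of attraction of the origin and hence forward invariant, $\bs_\tau(\bx)\in\cD$ for all $\tau\ge 0$, so $h_\lambda$ is defined along the whole forward orbit and we may set $g(\tau):=e^{-\lambda\tau}h_\lambda(\bs_\tau(\bx))$ for $\tau\in[0,t]$. Because $\bff\in\cC^2(\bX)$ the flow $\tau\mapsto\bs_\tau(\bx)$ is $\cC^1$ in $\tau$, and because $h_\lambda\in\cC^1$ the composition $g$ is differentiable.

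Next I would differentiate $g$ by the chain rule, using $\dot\bs_\tau(\bx)=\bff(\bs_\tau(\bx))$, to get
\[
\dot g(\tau)=e^{-\lambda\tau}\Big[\tfrac{\partial h_\lambda}{\partial\bx}(\bs_\tau(\bx))\,\bff(\bs_\tau(\bx))-\lambda h_\lambda(\bs_\tau(\bx))\Big].
\]
Evaluating the PDE (\ref{pde}) at the point $\bs_\tau(\bx)$ gives $\tfrac{\partial h_\lambda}{\partial\bx}(\bs_\tau(\bx))\bff(\bs_\tau(\bx))-\lambda h_\lambda(\bs_\tau(\bx))=-\bw_\lambda^\top\bff_n(\bs_\tau(\bx))$, so the bracket collapses and $\dot g(\tau)=-e^{-\lambda\tau}\bw_\lambda^\top\bff_n(\bs_\tau(\bx))$. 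Integrating this identity from $0$ to $t$ and using $\bs_0(\bx)=\bx$, i.e.\ $g(0)=h_\lambda(\bx)$, yields $e^{-\lambda t}h_\lambda(\bs_t(\bx))-h_\lambda(\bx)=-\int_0^t e^{-\lambda\tau}\bw_\lambda^\top\bff_n(\bs_\tau(\bx))\,d\tau$, which rearranges to (\ref{pde_soultion}). The converse direction (any function given by (\ref{pde_soultion}) for all $t$, or equivalently with a suitably vanishing boundary term, solves (\ref{pde})) follows by differentiating the right-hand side in $t$ at $t=0$ and running the same computation in reverse.

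\textbf{Expected main difficulty.} There is no deep obstacle here; the argument is essentially a verification via the method of characteristics. The only points needing care are (i) the legitimacy of differentiating the composition $e^{-\lambda\tau}h_\lambda(\bs_\tau(\bx))$, which is exactly where the regularity hypotheses $\bff\in\cC^2$ and $h_\lambda\in\cC^1$ are used, and (ii) guaranteeing the characteristic stays inside the set on which $h_\lambda$ is defined on the whole interval $[0,t]$, which is handled by forward invariance of $\cD$. I would also flag explicitly that Theorem~\ref{theorem_main} only gives an integral representation of a solution: it does not by itself settle existence or uniqueness of $h_\lambda$, and it does not guarantee that the boundary term $e^{-\lambda t}h_\lambda(\bs_t(\bx))$ vanishes as $t\to\infty$ — that requires the stability of the flow together with a sign condition on $\mathrm{Re}(\lambda)$, which is precisely the content reserved for the results that follow.
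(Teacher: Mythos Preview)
Your proposal is correct and follows essentially the same route as the paper: define the integrating factor $e^{-\lambda\tau}h_\lambda(\bs_\tau(\bx))$, differentiate along the flow, use the PDE (\ref{pde}) to collapse the bracket, and integrate from $0$ to $t$. Your write-up is in fact more careful than the paper's own proof in justifying the differentiation step and the forward invariance of $\cD$, and your closing remarks correctly anticipate that the vanishing of the boundary term is deferred to Theorem~\ref{theorem_main2}.
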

\begin{proof}
The PDE (\ref{pde}) can be written as 
\begin{align}
\frac{d h_\lambda(\bs_t(\bx))}{dt}-\lambda h_\lambda(\bs_t(\bx))+\bw^\top_\lambda \bff_n(\bs_t(\bx))=0.
\end{align}
Multiplying throughout by $e^{-\lambda t}$, we obtain
\[\frac{d (e^{-\lambda t} h_\lambda(\bs_t(\bx)))}{dt}+e^{-\lambda t}\bw^\top_\lambda \bff_n(\bs_t(\bx))=0.\]
Next, we integrate the above from $0$ to $t$, \textcolor{black}{thus obtaining}
\begin{gather*}
e^{-\lambda t} h_\lambda(\bs_t(\bx))-h_\lambda(\bx)+\int_0^t e^{-\lambda \tau}h_\lambda(\bs_\tau(\bx))d\tau=0,\\
\implies h_\lambda(\bx)=e^{-\lambda t}h_\lambda(s_t(\bx))+\int_0^t  e^{-\lambda \tau}\bw^\top \bff_n(s_\tau(\bx))d\tau.
\end{gather*}
This completes our proof.
\end{proof}

Our first main result \textcolor{black}{establishes} conditions under which the solution of the PDE (\ref{pde}) is nonlinear. 
\begin{theorem}\label{theorem_main2}
For the dynamical system (\ref{eq:dynamics}) \textcolor{black}{that satisfies Assumption \ref{assume_sys}}, let the origin be an asymptotically stable equilibrium point \textcolor{black}{with the domain of attraction $\cD$ and let $\bA$ be Hurwitz}. Furthermore, all the eigenvalues of the $\bA$ satisfy 
\begin{align}
-{\rm Re}(\lambda)+2{\rm Re}(\lambda_{max})<0,\label{eigendistribution}
\end{align}
where $\lambda_{max}$ is the eigenvalue \textcolor{black}{closest} to the $j\omega$ axis and in the left half plane. Let $h_\lambda$ be the solution of PDE (\ref{pde}) as given in (\ref{pde_soultion}). 
Then, 
\begin{align}
\lim_{t\to \infty}e^{-\lambda t}h_\lambda(\bs_t(\bx))=0,\;\;\;\forall \bx\in \cD\label{condition_convergence}
\end{align}
if 
$h_\lambda(\bx)$ is purely nonlinear function of $\bx$ i.e., $\frac{\partial h_\lambda}{\partial \bx}(0)=0$. 
 \end{theorem}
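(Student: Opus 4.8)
The strategy is to bound $e^{-\lambda t}h_\lambda(\bs_t(\bx))$ directly, by pairing a quadratic estimate on $h_\lambda$ near the origin against the exponential rate at which trajectories starting in $\cD$ converge to the origin, and then to observe that the spectral condition (\ref{eigendistribution}) is exactly what forces the resulting exponent to be negative. The path-integral formula (\ref{pde_soultion}) of Theorem \ref{theorem_main} re-enters only as a byproduct.

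First I would pin down the behaviour of the data near the origin. Since $\bff\in\cC^2(\bX)$ and $\frac{\partial\bff}{\partial\bx}(0)=\bA$, the purely nonlinear part satisfies $\bff_n(0)=0$ and $\frac{\partial\bff_n}{\partial\bx}(0)=0$, so Taylor's theorem gives $\|\bff_n(\bx)\|\le K_1\|\bx\|^2$ on some ball $\|\bx\|\le r_0$. In the same way, from $h_\lambda(0)=0$, the hypothesis $\frac{\partial h_\lambda}{\partial\bx}(0)=0$, and the $\cC^2$-regularity of $h_\lambda$ near the origin — which I would either take as a standing assumption or deduce from the smoothness of $\bff$ and the transport structure of (\ref{pde}) — one obtains a constant $K>0$ with $|h_\lambda(\bx)|\le K\|\bx\|^2$ for $\|\bx\|\le r_0$. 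I expect this to be the main obstacle: a bare $\cC^1$ hypothesis only yields $|h_\lambda(\bx)|=o(\|\bx\|)$, which is not enough, because (\ref{eigendistribution}) permits ${\rm Re}(\lambda)/{\rm Re}(\lambda_{max})$ to be as large as (just under) $2$; it is precisely the \emph{quadratic} decay of $h_\lambda$ that is balanced by the factor $2$ in (\ref{eigendistribution}).

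Next I would record the decay of the flow. Let $\mu:={\rm Re}(\lambda_{max})<0$ be the spectral abscissa of the Hurwitz matrix $\bA$, and fix a small $\epsilon>0$. Standard local analysis (the linearization governs the flow near the origin and $\bff_n$ is higher order) gives a neighbourhood of $0$ and a constant $M$ with $\|\bs_t(\bx)\|\le M\|\bx\|e^{(\mu+\epsilon)t}$ for all $t\ge0$ on that neighbourhood; since every $\bx\in\cD$ reaches this neighbourhood in finite time and $\bs_t(\bx)$ is bounded on the remaining compact time interval, this upgrades to $\|\bs_t(\bx)\|\le C_{\bx}\,e^{(\mu+\epsilon)t}$ for all $t\ge0$, with $C_{\bx}$ depending on $\bx$. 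Combining the two estimates, for all $t$ large enough that $\|\bs_t(\bx)\|\le r_0$,
\[
\bigl|e^{-\lambda t}h_\lambda(\bs_t(\bx))\bigr|
= e^{-{\rm Re}(\lambda)\,t}\,\bigl|h_\lambda(\bs_t(\bx))\bigr|
\le K\,C_{\bx}^{2}\,e^{(-{\rm Re}(\lambda)+2\mu+2\epsilon)\,t}.
\]
By (\ref{eigendistribution}) the exponent $-{\rm Re}(\lambda)+2\mu$ is strictly negative, so choosing $\epsilon$ small keeps the whole exponent negative and the right-hand side tends to $0$; this is (\ref{condition_convergence}). Finally, letting $t\to\infty$ in (\ref{pde_soultion}) and using that its integrand is $O(e^{(-{\rm Re}(\lambda)+2\mu+2\epsilon)\tau})$ by the same bounds, the path integral $\int_0^\infty e^{-\lambda\tau}\bw_\lambda^\top\bff_n(\bs_\tau(\bx))\,d\tau$ converges and equals $h_\lambda(\bx)$ — the sense in which the path-integral formula is valid — which I would state as an immediate corollary.
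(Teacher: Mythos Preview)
Your proposal is correct and follows essentially the same skeleton as the paper: a Taylor/mean-value argument gives $|h_\lambda(\bx)|\le K\|\bx\|^2$ near the origin, an exponential bound $\|\bs_t(\bx)\|\le C\,e^{{\rm Re}(\lambda_{max})t}$ is established for the flow, and the two are combined so that the spectral gap condition (\ref{eigendistribution}) forces $e^{-\lambda t}h_\lambda(\bs_t(\bx))\to 0$.

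The only noteworthy difference is in how the flow decay is obtained. The paper invokes the Hartman--Grobman near-identity conjugacy $\bz=\bx+\bd(\bx)$ to write $\bs_t(\bx)=\bD^{-1}(e^{\bA t}\bD(\bx))$ explicitly and then bounds each term, claiming the sharp rate $e^{{\rm Re}(\lambda_{max})t}$. You instead use the more elementary linearization estimate $\|\bs_t(\bx)\|\le M\|\bx\|e^{(\mu+\epsilon)t}$ with an $\epsilon$-slack, and absorb the slack into the strict inequality in (\ref{eigendistribution}). Your route is arguably cleaner: it avoids the Hartman--Grobman machinery, it handles possible Jordan blocks in $\bA$ (where $\|e^{\bA t}\|$ carries polynomial prefactors that the paper's sharp-rate claim glosses over), and you explicitly extend from the local neighbourhood to all of $\cD$ via the finite entry time, a step the paper leaves implicit. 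The paper's route, on the other hand, makes the connection to the linearizing diffeomorphism more transparent, which fits its broader narrative about principal eigenfunctions extending the Hartman--Grobman conjugacy.
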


\begin{proof}
We show that if  $h_\lambda$ is nonlinear then (\ref{condition_convergence}) is true. Since $h_\lambda$ is \textcolor{black}{purely} nonlinear, $\nabla_x h_\lambda(0)=0$ and by construction $h_\lambda(0)=0$. \textcolor{black}{Next,} we show that for every $\varepsilon >0$, there exists $c_{\varepsilon}>0$ such that 
\[
    \|h_\lambda(\bx)\| \le c_\varepsilon \|\bx\|^2
\]
for all $\|\bx\|\le\varepsilon$. 
By applying the mean value theorem inside $\|\bx\|\le\varepsilon$, we have
\begin{gather*}
    h_\lambda(\bx) = h_\lambda(0) + \nabla_\bx h_\lambda(0)\bx + \bx^T\nabla_\bx^2 h_\lambda(\bz)\bx \\
    \implies \|h_\lambda(\bx)\| \le \|\nabla_\bx^2 h_\lambda(\bz)\|\cdot \|\bx\|^2
\end{gather*}
for some point $\bz$ on the line segment joining $0$ and $\bx$. Since $h_\lambda$ is smooth over the compact domain $\|\bx\|\le \varepsilon$, we can define a constant $c_{\varepsilon} := \sup_{\|\bx\|\le\varepsilon} \|\nabla_\bx^2 h_\lambda(\bx)\| $, and obtain the uniform bound 
$\|h_\lambda(\bx)\|\le c_{\varepsilon}\|\bx\|^2$ in the region $\|\bx\|\le\varepsilon$, 
where $c_\varepsilon := \left(\sum_i c_{\varepsilon,i}^2\right)^\frac{1}{2}$.
Now for $\|\bx\|\leq \varepsilon$, there exists, by Hartman Grobman theorem, a near identity change of coordinates with inverse in the small neighborhood around the origin, say of size $\|\bx\|\leq \epsilon$, of the form
\begin{equation}\bz=\bx+\bd(\bx)=\bD(\bx)\iff \bx=\bD^{-1}(\bz)=\bz+\bar \bd(\bz),\label{HG}
\end{equation}
with $\bd(\bx)$ and $\bar \bd(\bz)$ purely nonlinear
such that the nonlinear system is transformed \textcolor{black}{into} linear system i.e.,
$\dot \bx=\bA\bx+\bff_n(\bx)\implies \dot \bz=\bA\bz$ 
and hence 
\[\bs_t(\bx)=\bD^{-1}(e^{\bA t}\bD(\bx))\implies
\bs_t(\bx)=\bD^{-1}(e^{\bA t}(\bx+\bd(\bx)))\]
\[=e^{\bA t}\bx+e^{\bA t}\bd(\bx)+\bar \bd(e^{\bA t}\bx+e^{\bA t}\bd(\bx)).\]
In the above, we have used (\ref{HG}) for $\bD^{-1}$. Since $\bar \bd(\bz)$ is purely nonlinear, for $\|\bx\|\leq \epsilon$, we can get using mean value theorem 
\[\|\bar \bd(\bz)\|\leq c_{\bar d} \|\bz\|^2,\;\;\;\;\|\bd(\bx)\|\leq c_d \|\bx\|^2.\]
Using the above inequality, Cauchy Schwartz inequality,  and the fact that $\|\bx\|\leq \epsilon$, we obtain 
\[\|\bs_t(\bx)\|\leq c_1 e^{{\rm Re}(\lambda_{max} t)}\implies \|\bs_t(\bx)\|^2\leq c_1^2 e^{{\rm Re}(2\lambda_{max} t)} \]
for some constant $c_1$ that depends on $\epsilon, c_d$, and $\bar c_d$. Now
\[ \|h_\lambda(\bs_t(\bx))\|\leq c_\varepsilon \|\bs_t(\bx)\|^2\leq c_2  e^{{\rm Re}(2\lambda_{max} t)},\]
where $c_2 = c_\varepsilon c_1^2$. Then, the limit in equation \eqref{condition_convergence} follows by noting that 
\[\|e^{-\lambda t}h_\lambda(\bs_t(\bx))\|\leq c_2 e^{\left(-{\rm Re}(\lambda)+2{\rm Re}(\lambda_{max})\right)t}.\]

\end{proof}
Using the results of the above theorem we have the following results for the computation of Koopman eigenfunctions under the stability assumption on the system dynamics.


\begin{theorem}\label{theorem_stable}
Consider the dynamical system (\ref{eq:dynamics}) with origin asymptotically stable and with the domain of attraction $\cD$. Let  the eigenvalue $\lambda$ of matrix $\bA$ satisfy condition (\ref{eigendistribution}). Then the principal eigenfunction, $\phi_\lambda$, corresponding to eigenvalue $\lambda$, is well defined in the domain $\cD$ and is given by following path-integral formula:
\begin{align}
\phi_\lambda(\bx)=\bw_\lambda^\top \bx+\int_0^\infty e^{-\lambda t}\bw_\lambda^\top \bff_n(\bs_t(\bx))dt
\end{align}
where $\bw_\lambda$ \textcolor{black}{satisfies} $\bw_\lambda^\top \bA=\lambda \bw_\lambda^\top$. 
\end{theorem}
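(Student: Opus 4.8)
The plan is to pass to the limit $t\to\infty$ in the solution formula of Theorem~\ref{theorem_main} and use the decay estimate of Theorem~\ref{theorem_main2} to discard the boundary term. Write the principal eigenfunction as $\varphi_\lambda(\bx)=\bw_\lambda^\top\bx+h_\lambda(\bx)$ as in (\ref{princ_eig}), where $\bw_\lambda$ is the left eigenvector of $\bA$ from (\ref{linear_eig}) and $h_\lambda$ is the purely nonlinear solution of the PDE (\ref{pde}), so that $h_\lambda(0)=0$ and $\frac{\partial h_\lambda}{\partial\bx}(0)=0$. By Theorem~\ref{theorem_main}, for every $t\ge 0$,
\[
h_\lambda(\bx)=e^{-\lambda t}h_\lambda(\bs_t(\bx))+\int_0^t e^{-\lambda\tau}\bw_\lambda^\top\bff_n(\bs_\tau(\bx))\,d\tau .
\]
Since the present hypotheses (origin asymptotically stable with domain of attraction $\cD$, $\bA$ Hurwitz, and the spectral condition (\ref{eigendistribution})) are exactly those of Theorem~\ref{theorem_main2}, that theorem gives $e^{-\lambda t}h_\lambda(\bs_t(\bx))\to 0$ as $t\to\infty$ for every $\bx\in\cD$. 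Hence, once the improper integral is shown to converge, letting $t\to\infty$ yields $h_\lambda(\bx)=\int_0^\infty e^{-\lambda\tau}\bw_\lambda^\top\bff_n(\bs_\tau(\bx))\,d\tau$ and therefore the claimed formula for $\phi_\lambda=\varphi_\lambda$.

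Next I would prove convergence of the integral at a fixed $\bx\in\cD$ by splitting $[0,\infty)=[0,T]\cup[T,\infty)$. Because the origin is asymptotically stable with domain of attraction $\cD$, the trajectory $\bs_\tau(\bx)$ enters the Hartman--Grobman neighborhood $\|\cdot\|\le\epsilon$ used in the proof of Theorem~\ref{theorem_main2} after some finite time $T=T(\bx)$. On $[0,T]$ the integrand is continuous on a compact time interval, hence bounded, and contributes a finite amount. For $\tau\ge T$ we have $\bs_\tau(\bx)=\bs_{\tau-T}(\bs_T(\bx))$ with $\bs_T(\bx)$ inside the small neighborhood, so the estimate from the proof of Theorem~\ref{theorem_main2} gives $\|\bs_\tau(\bx)\|^2\le c_1^2 e^{2{\rm Re}(\lambda_{max})(\tau-T)}$; since $\bff_n$ is purely nonlinear and $\cC^2$, a mean value argument gives $\|\bff_n(\bs_\tau(\bx))\|\le c_3\|\bs_\tau(\bx)\|^2$ on that neighborhood, and combined with $|e^{-\lambda\tau}|=e^{-{\rm Re}(\lambda)\tau}$ the integrand is bounded by a constant times $e^{(-{\rm Re}(\lambda)+2{\rm Re}(\lambda_{max}))\tau}$, which is integrable on $[T,\infty)$ precisely by (\ref{eigendistribution}). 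Thus the integral converges and $\phi_\lambda$ is well defined at every point of $\cD$.

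Finally I would upgrade this to $\phi_\lambda\in\cC^1(\cD)$ and verify it is an eigenfunction. Since $\cD$ is open and forward invariant and solutions depend $\cC^1$ on initial conditions, the entry time $T$ and the constant $c_1$ can be chosen uniformly on any compact set $K\subset\cD$, so both the integral and its formal $\bx$-derivative converge uniformly on $K$; dominated convergence then gives $\phi_\lambda\in\cC^1(\cD)$ and justifies differentiation under the integral sign. Differentiating the resulting formula (equivalently, differentiating the identity of Theorem~\ref{theorem_main} in $\bx$ and then letting $t\to\infty$) shows that $h_\lambda$ solves (\ref{pde}), so that $\phi_\lambda=\bw_\lambda^\top\bx+h_\lambda$ satisfies ${\cal K}_{\bf f}\phi_\lambda=\lambda\phi_\lambda$, i.e., $\phi_\lambda$ is the principal eigenfunction associated with $\lambda$. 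I expect the main obstacle to be exactly this last step: the decay bounds of Theorem~\ref{theorem_main2} are only local (valid inside the Hartman--Grobman chart), so the real work is in combining them with global asymptotic stability to obtain locally uniform convergence of the path integral over the whole of $\cD$, which is what secures the regularity needed to legitimately call the limit an eigenfunction.
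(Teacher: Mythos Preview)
Your approach is essentially the same as the paper's: decompose $\varphi_\lambda=\bw_\lambda^\top\bx+h_\lambda$, invoke Theorem~\ref{theorem_main} for the solution formula, and use Theorem~\ref{theorem_main2} to kill the boundary term $e^{-\lambda t}h_\lambda(\bs_t(\bx))$ as $t\to\infty$. The paper's own proof is a three-sentence sketch that stops there; your additional paragraphs establishing convergence of the improper integral (via the $[0,T]\cup[T,\infty)$ split and the quadratic bound on $\bff_n$) and arguing for $\cC^1$ regularity on $\cD$ go beyond what the paper supplies, but they fill in details the paper leaves implicit rather than constituting a different route.
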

\begin{proof}
The eigenfunction corresponding to eigenvalue $\lambda$ admits a decomposition into linear and nonlinear parts as given in Eqs. (\ref{princ_eig}) and (\ref{linear_eig}). Since $h_\lambda$ is assumed to be nonlinear, the results of Theorem \ref{theorem_main2} applies and hence $\lim_{t\to \infty}e^{-\lambda t}h_\lambda(\bs_t(\bx))=0$ for all $\bx\in \cD$. The \textcolor{black}{result} then follows by applying Theorem \ref{theorem_main} on the solution formula of linear PDE.
\end{proof}
\begin{remark} 
The eigenfunctions $\phi_{\lambda_i}$ for $i=1,\ldots,n$ can be used as diffeomorphism for the linearization of nonlinear system valid within the domain of attraction $\cD$. In \cite{lan2013linearization,mezic2020spectrum}, the authors propose an approach for the construction of such diffeomorphism valid within the domain of attraction based on the extension of the Hartman Grobman diffeomorphism, which is known to exist in a small neighborhood of the origin. 
\end{remark}
The results of Theorem \ref{theorem_stable} can be extended to compute the Koopman spectrum for the system with linearization having all its eigenvalues in the right half plane by time \textcolor{black}{reversing} the vector field. We have the following Corollary in this direction.

\begin{corollary}\label{corollary_unstable}
Consider the dynamical system (\ref{eq:dynamics}) satisfying Assumption \ref{assume_sys}. Let the matrix $\bA$ for the linearization of system dynamics \textcolor{black}{have} all its eigenvalues in the strict right half plane with eigenvalue, $\lambda$, satisfying the condition
\begin{align}
{\rm Re}(\lambda)-2{\rm Re}(\lambda_{max})<0\label{eigendistribution_unstable}.
\end{align}
 The principal eigenfunction, $\phi_\lambda$, with eigenvalue $\lambda$, are well defined in the domain $\bar\cD:=\{\bx\in \bX: \lim_{t\to \infty} \bs_{-t}(\bx)=0\}$ and is given by the following formula
\begin{align}
\phi_\lambda(\bx)=\bw_\lambda^\top \bx+\int_0^\infty e^{\lambda t}\bw_\lambda^\top \bff(\bs_{-t}(\bx))dt,\label{path_integral_backward}
\end{align}
where $\bw_\lambda$ \textcolor{black}{satisfies} $\bw_\lambda^\top \bA=\lambda \bw_\lambda^\top$. 
\end{corollary}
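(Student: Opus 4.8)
The plan is to reduce the statement to Theorem \ref{theorem_stable} by time-reversing the vector field, exactly as suggested in the text preceding the Corollary. I would first introduce the auxiliary system $\dot\by = \bg(\by) := -\bff(\by) = -\bA\by - \bff_n(\by)$, whose flow is $\bs_{-t}$. Its linearization at the origin is $-\bA$, which is Hurwitz precisely because every eigenvalue of $\bA$ lies in the open right half plane; hence the origin is asymptotically stable for $\bg$, and its domain of attraction is by definition the set $\bar\cD = \{\bx : \lim_{t\to\infty}\bs_{-t}(\bx)=0\}$. The purely nonlinear part of $\bg$ is $\bg_n(\by) = -\bff_n(\by)$, which still vanishes to second order at the origin, so Assumption \ref{assume_sys} and all structural hypotheses needed for the stable case hold for $\bg$.

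Next I would match the spectral data. If $\bw_\lambda^\top\bA = \lambda\bw_\lambda^\top$, then $\bw_\lambda^\top(-\bA) = (-\lambda)\bw_\lambda^\top$, so $-\lambda$ is an eigenvalue of the linearization of $\bg$ with the same left eigenvector $\bw_\lambda$; moreover the eigenvalue of $-\bA$ closest to the $j\omega$ axis from the left half plane is $-\lambda_{max}$. Therefore the requirement (\ref{eigendistribution}) for $\bg$ at eigenvalue $-\lambda$, namely $-{\rm Re}(-\lambda) + 2{\rm Re}(-\lambda_{max}) < 0$, is exactly the assumed condition (\ref{eigendistribution_unstable}). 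Applying Theorem \ref{theorem_stable} to $\bg$ then produces a principal eigenfunction, well defined on $\bar\cD$, of the form $\bw_\lambda^\top\bx + \int_0^\infty e^{\lambda t}\bw_\lambda^\top\bg_n(\bs_{-t}(\bx))\,dt$, satisfying $\phi(\bs_{-t}(\bx)) = e^{-\lambda t}\phi(\bx)$ for all $t\ge 0$.

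Finally I would transfer this back to the original system: replacing $t$ by $-t$ in the last identity gives $\phi(\bs_t(\bx)) = e^{\lambda t}\phi(\bx)$, so this $\phi$ is an eigenfunction of the original Koopman operator $\mathbb{U}_t$ with eigenvalue $\lambda$, i.e., it is $\phi_\lambda$, and substituting $\bg_n = -\bff_n$ yields the backward path-integral formula (\ref{path_integral_backward}). The one step I expect to require genuine care is re-running the convergence argument of Theorem \ref{theorem_main2} in reversed time: one must verify that near the origin $\|\bs_{-t}(\bx)\| \le c_1 e^{-{\rm Re}(\lambda_{max})t}$, that the purely nonlinear part $h_{-\lambda}$ of the reversed eigenfunction satisfies $\|h_{-\lambda}(\bz)\| \le c\|\bz\|^2$ via the Hartman--Grobman coordinates, and that these combine under (\ref{eigendistribution_unstable}) to force $e^{\lambda t}h_{-\lambda}(\bs_{-t}(\bx)) \to 0$ on all of $\bar\cD$ — this is what licenses pushing the upper limit of the integral to infinity and also confirms that the natural domain of definition is precisely $\bar\cD$.
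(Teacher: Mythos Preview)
Your approach is exactly what the paper intends: the Corollary is stated without proof, the text immediately preceding it indicating that it follows from Theorem \ref{theorem_stable} by time-reversing the vector field, and your reduction carries this out in full detail. Note only that the substitution $\bg_n = -\bff_n$ introduces a minus sign in front of the integral that the printed formula (\ref{path_integral_backward})---which also has $\bff$ in place of $\bff_n$---does not display; this appears to be a typo in the paper rather than a flaw in your argument.
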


Theorem \ref{theorem_stable} and Corollary \ref{corollary_unstable} \textcolor{black}{provide} an approach for computing the Koopman principal eigenfunctions for the cases when the equilibrium point is stable and anti-stable. 
It is important to emphasize that the results of Theorem \ref{theorem_stable} and Corollary \ref{corollary_unstable} rely on the \textcolor{black}{sufficient} condition that can be verified for the computation of principal eigenfunction. \textcolor{black}{The following theorem for principal eigenfunction computation applies to a system with a saddle-type equilibrium point.}
\begin{theorem}\label{theorem_saddle} Consider the dynamical system (\ref{eq:dynamics}) satisfying Assumption \ref{assume_sys} with $\lambda$ \textcolor{black}{as an} eigenvalue of $\bA$ such that ${\rm Re}(\lambda)>0$. Assume that $h_\lambda(\bx)$, the nonlinear part of the principal eigenfunction corresponding to eigenvalue $\lambda$ satisfy 
\begin{align}\lim_{t\to \infty}|h_\lambda(\bs_t(\bx))|\leq M\label{bounded}
\end{align}
for some constant $M$ and for all \textcolor{black}{$\bx$ in some set $\bX_1 \subseteq \bX$}. Then the eigenfunction corresponding to eigenvalue $\lambda$ can be computed using the following path-integral formula for all $\bx\in \bX_1:$
\begin{align}
\phi_\lambda(\bx)=\bw_\lambda^\top \bx+\int_0^\infty e^{-\lambda t}\bw_\lambda^\top \bff(\bs_t(\bx))dt.\label{pathintegral}
\end{align}
\end{theorem}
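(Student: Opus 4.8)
The plan is to reduce the claim to the characteristic-solution identity already furnished by Theorem~\ref{theorem_main} and then pass to the limit $t\to\infty$, using the sign condition ${\rm Re}(\lambda)>0$ together with hypothesis (\ref{bounded}) to make the boundary term disappear. Concretely, I would start from the identity
\[
h_\lambda(\bx)=e^{-\lambda t}h_\lambda(\bs_t(\bx))+\int_0^t e^{-\lambda \tau}\bw_\lambda^\top\bff_n(\bs_\tau(\bx))\,d\tau,
\]
which, by Theorem~\ref{theorem_main}, holds for every $t\ge 0$ and every $\bx$ in the domain of $\phi_\lambda$, and then exploit the fact that its left-hand side does not depend on $t$.

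The first step is to control the boundary term. Fix $\bx\in\bX_1$. Hypothesis (\ref{bounded}) gives a time $T_0$ beyond which $|h_\lambda(\bs_t(\bx))|\le M+1$, while on the compact interval $[0,T_0]$ the map $t\mapsto h_\lambda(\bs_t(\bx))$ is continuous (the flow is continuous and $h_\lambda\in\cC^2$), hence bounded; consequently $N_\bx:=\sup_{t\ge 0}|h_\lambda(\bs_t(\bx))|<\infty$. Since $|e^{-\lambda t}|=e^{-{\rm Re}(\lambda)t}$ and ${\rm Re}(\lambda)>0$, we get $|e^{-\lambda t}h_\lambda(\bs_t(\bx))|\le N_\bx e^{-{\rm Re}(\lambda)t}\to 0$ as $t\to\infty$. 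Letting $t\to\infty$ in the displayed identity, the boundary term vanishes, the improper integral $\int_0^\infty e^{-\lambda\tau}\bw_\lambda^\top\bff_n(\bs_\tau(\bx))\,d\tau$ converges (its partial integrals have the finite limit $h_\lambda(\bx)$), and we obtain $h_\lambda(\bx)=\int_0^\infty e^{-\lambda t}\bw_\lambda^\top\bff_n(\bs_t(\bx))\,dt$. Substituting this into the linear/nonlinear decomposition (\ref{princ_eig}), namely $\phi_\lambda(\bx)=\bw_\lambda^\top\bx+h_\lambda(\bx)$, produces the path-integral representation of $\phi_\lambda$ on $\bX_1$ — of exactly the form obtained in Theorem~\ref{theorem_stable} (the purely nonlinear field $\bff_n=\bff-\bA\bx$ appearing in the integrand, as in (\ref{pde_soultion})), but now under the weaker asymptotic requirement (\ref{bounded}) in place of asymptotic stability of the flow.

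I expect the only genuinely substantive point to be the vanishing of $e^{-\lambda t}h_\lambda(\bs_t(\bx))$, which is precisely where both hypotheses — ${\rm Re}(\lambda)>0$ and (\ref{bounded}) — get consumed; everything else is bookkeeping. Two minor technical matters deserve a line each: upgrading the asymptotic estimate (\ref{bounded}) to a uniform bound on all of $[0,\infty)$ (handled by continuity/compactness on $[0,T_0]$, and tacitly requiring forward completeness of the trajectory through $\bx\in\bX_1$, which is implicit in the very statement of (\ref{bounded})), and noting that convergence of the improper integral need not be argued separately since it is forced by the $t$-independence of the left-hand side of the characteristic identity. I do not foresee an obstacle beyond these.
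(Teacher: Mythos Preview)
Your proposal is correct and follows essentially the same route as the paper: invoke the characteristic-solution formula from Theorem~\ref{theorem_main}, use ${\rm Re}(\lambda)>0$ together with (\ref{bounded}) to kill the boundary term $e^{-\lambda t}h_\lambda(\bs_t(\bx))$, and then combine with the decomposition (\ref{princ_eig}). Your treatment is in fact more careful than the paper's own argument --- you explicitly upgrade the asymptotic bound to a uniform one on $[0,\infty)$ via continuity on a compact time interval, and you correctly note that the integrand arising from (\ref{pde_soultion}) is $\bw_\lambda^\top\bff_n$ rather than the $\bw_\lambda^\top\bff$ appearing in (\ref{pathintegral}).
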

\begin{proof}
The condition (\ref{bounded}) combined with the fact that ${\rm Re}(\lambda)>0$ ensure that $\lim_{t\to \infty}e^{-\lambda t}h_\lambda(\bs_t(\bx))=0$. \textcolor{black}{The expression \eqref{pathintegral} then follows from \eqref{princ_eig} and the PDE solution \eqref{pde_soultion} by taking $t\rightarrow 0.$}
\end{proof}
Note that the main issue with applying the results from the above Theorem is that the condition (\ref{bounded}) cannot be easily verified. For a system with saddle-type equilibrium point, \textcolor{black}{computation of} eigenfunctions corresponding to eigenvalues with negative real part can be \textcolor{black}{similarly done} by applying the results of Theorem \ref{theorem_saddle} for the time-reversed vector field. We would like to emphasize that in applications such as optimal control, it is of interest to compute only part of eigenfunctions corresponding to unstable eigenvalues \cite{vaidya_Koopmanspectrumcdc2022}

\subsection{Deep Neural Network for Principal Eigenfunction}\label{subsec:DNN}
Deep learning techniques have been successfully applied in literature  towards computation of the Koopman operator and its associated eigenfunctions \cite{lusch2018deep, folkestad2020extended}. In all of these prior works, the main approach is to parameterize the eigenfunctions (or nonlinear `lifting' functions in other cases) using  autoencoders and then utilizing sampled trajectory data to compute the loss function for training.

Let $\mathcal{P}=\left\{(\bx_i,\mathbf{y}_i)\right\}_{i\in\mathcal{I}}$ be a set of points along system trajectories sampled at a uniform time interval $\tau$, that is,
\[
    \by_i = \bs_\tau(\bx_i), \;i\in\mathcal{I}.
\] Then, the DNN parameterized vector of eigenfunctions or lifting functions $\psi_\theta$ is typically learned by minimizing the loss
\begin{equation}\label{eq:loss_others}
    \min_{K,\theta,\omega} \left[ 
    \begin{array}{c}
\underset{(\bx,\by)\sim\mathcal{D}}{\mathbb{E}} \big[ \|\psi_\theta(\by) - K\psi_\theta(\bx)\|\big] \;+ \\
     
     \underset{x\sim \bX}{\mathbb{E}}\big[ \|\bx - \eta_\omega\big(\psi_\theta(\bx)\big)\|\big] \
     \end{array}
     \right],
\end{equation}
where $\mathbb{E}[\cdot]$ denotes the expected value with respect to the data distribution specified. The function $\eta_\omega$ is a decoder network parameterized by $\omega$, which maps points from the lifted Koopman space back to the original state-space 
 and $K$ is the finite-dimensional approximation of the Koopman operator. The second term in the equation above is the auto-encoder loss and is needed to ensure that the DNN does not learn a trivial solution $\psi_\theta \equiv 0$. In place of the first term, it is also common to use Koopman PDE \eqref{eq:koop_def} in the loss function, wherein one penalizes the violation in the PDE satisfaction. In the case where the DNN parameterizes the lifting function, one needs to indirectly extract the eigenfunctions using the learned $K$ matrix and $\psi_\theta$.

Our approach using path-integral can be used to learn the principal Koopman eigenfunctions in a more direct fashion, using the equation \eqref{princ_eig} to create a labeled training dataset $\mathcal{D}' = \left\{(\bx_i,\phi_\lambda(\bx_i))\right\}_{i\in\mathcal{I}}$, thus leading to the following supervised learning problem:
\begin{equation}
\min_{\theta} \underset{(\bx,\bz)\sim\mathcal{D}'}{\mathbb{E}} \left[\|\bz - \bw^\top\bx - \hat{h}_\theta(\bx)\|\right], \label{eq:loss1}
\end{equation}
where $\theta$ parameterizes the nonlinear part of the principal eigenfunction using the DNN $\hat{h}_\theta$. Additionally, one can introduce the following secondary term in the loss function for regularization:
\begin{equation}\label{eq:loss2}
    \underset{\bx \in X}{\mathbb{E}}\left[ \Big\| \frac{\partial \hat{h}_\theta}{\partial \bx}\bff(\bx)-\lambda \hat{h}_\theta(\bx)+\bw^\top \bff_n(\bx)\Big\|\right].
\end{equation}
This ensures that the network does not overfit to the dataset $\mathcal{D}'$. Note that this secondary term \eqref{eq:loss2} is much cheaper to evaluate compared to the loss term in \eqref{eq:loss1} due to offline computations involved in the generation of labeled dataset $\mathcal{D}'$. Moreover, since PDE \eqref{pde} does not admit a trivial solution (unlike PDE \eqref{eq:koop_def}), we do not need an additional auto-encoder loss term like in equation \eqref{eq:loss_others}.

\section{Simulation Results}

\noindent{\bf Analytical Example 1}: Consider the dynamics of a one-dimensional system given by 
\begin{align}
    \dot{x} = \alpha(x-x^3). \nonumber
\end{align}
The principal eigenfunctions for this system can be computed analytically as $\phi(x) = \frac{x}{\sqrt{1-x^2}}$. Note that $\phi(x)$ is well-defined within the domain $x \in (-1,1)$. For $\alpha=-1$, the system has a stable equilibrium point at the origin (with eigenvalue $\lambda=-1$). Although $\phi(x)$ blows up as $x \xrightarrow{}(-1,1)$, since $\bs_t(x)\xrightarrow{}0$, condition in Eq.  \eqref{condition_convergence} is satisfied. The corresponding eigenfunction can be estimated using Theorem \ref{theorem_stable} as shown in Fig. \ref{fig:analytical_1D}a. For $\alpha=1$, the origin is unstable, and hence the results of Theorem \ref{theorem_stable} do not apply.  But the results of Corollary \ref{corollary_unstable} apply, and the estimated eigenfunction using Eq. (\ref{path_integral_backward}) matches perfectly with the analytical solution.\\
\begin{figure}
    \centering
    \includegraphics[width = \linewidth]{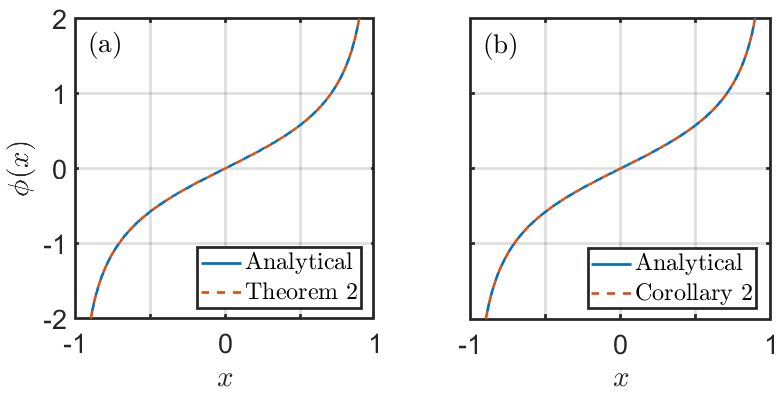}
    \caption{Analytical example 1: (a) {\color{black}eigenfunction corresponding to stable eigenvalue} estimated using Theorem \ref{theorem_stable} (b) {\color{black} eigenfunction corresponding to unstable eigenvalue} estimated using Corollary \ref{corollary_unstable}.}
    \label{fig:analytical_1D}
\end{figure}

\noindent{\bf Analytical Example 2:} Consider the dynamics of a two-dimensional system given by 
\begin{align}
 &\dot{x}_1 = -2\lambda_2x_2(x_1^2-x_2-2x_1x_2^2+x_2^4) \nonumber \\
 &+\lambda_1(x_1+4x_1^2x_2-x_2^2-8x_1x_2^3+4x_2^5) \nonumber\\
 &\dot{x}_2 = 2\lambda_1(x_1-x_2^2)^2-\lambda_2(x_1^2 x_2 - 2x_1 x_2^2 + x_2^4) \nonumber
\end{align}
where $\lambda_1, \lambda_2$ are the eigenvalues of the system {\color{black}when linearized about the origin} \cite{bollt2018matching}. For this system, the eigenfunctions can be computed analytically as $\phi_{\lambda_1}(x) = x_1-x_2^2$ and $\phi_{\lambda_2}(x) = -x_1^2+x_2+2x_1x_2^2-x_2^4$. We pick the eigenvalues $\lambda_1=-1$ and $\lambda_2=3$ such that the system has a saddle equilibrium at the origin. The analytical eigenfunction corresponding to $\lambda_2=3$ is shown in Fig. \ref{fig:analytical_2D}a.  The {\color{black} eigenfunction corresponding to the unstable eigenvalues} can be estimated accurately using Theorem \ref{theorem_saddle} as shown in Figure \ref{fig:analytical_2D}b.\\ 
\begin{figure}
    \centering
    \includegraphics[width = \linewidth]{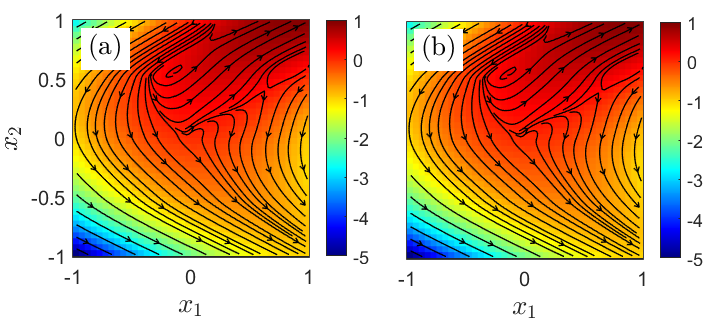}
    \caption{Analytical Example 2 with saddle equilibrium point: Eigenfunction corresponding to ${\rm Re}(\lambda)>0$. (a) analytical (b) estimated using Theorem \ref{theorem_stable}.}
    \label{fig:analytical_2D}
\end{figure}

\noindent {\bf Duffing Oscillator}: The Duffing oscillator dynamics is
\begin{align*}
    &\dot{{x}}_1 = {x}_2, \;\;\;\; \dot{{x}}_2 = {x}_1 - \delta {x}_2 - {x}_1^3
\end{align*}
For eigenfunction computation, we use $\delta = 0.5$. The equilibrium point at the origin is a saddle point. Fig. \ref{fig:duffing_eigfun}a shows the {\color{black} eigenfunction corresponding to the unstable eigenvalue} obtained for the equilibrium point at the origin after $t=20$ s using Theorem \ref{theorem_saddle}. Since the eigenfunctions remain bounded, equation \eqref{condition_convergence} is satisfied. The stable manifold (shown in yellow in Fig. \ref{fig:duffing_eigfun}b) is obtained as the zero-level set of this eigenfunction. The magnitude of the {\color{black} (complex) eigenfunction corresponding to the stable eigenvalue}   obtained after $t=20 s$ for the equilibrium point at [1,0] is shown in Fig. \ref{fig:duffing_eigfun}c. 

The Lyapunov function verifying the stability of the equilibrium dynamics is constructed as $V(\bx)=\Phi^\top(\bx) \bP\Phi(\bx)$, where $\bP$ is a positive matrix obtained as the solution of the following Lyapunov equation $\Lambda^\top \bP+\bP\Lambda<0$ \cite{mauroy2016global}. The Lyapunov function for this system is shown in Fig. \ref{fig:duffing_eigfun}d.\\ 

\begin{figure}
    \centering
    \includegraphics[width = \linewidth]{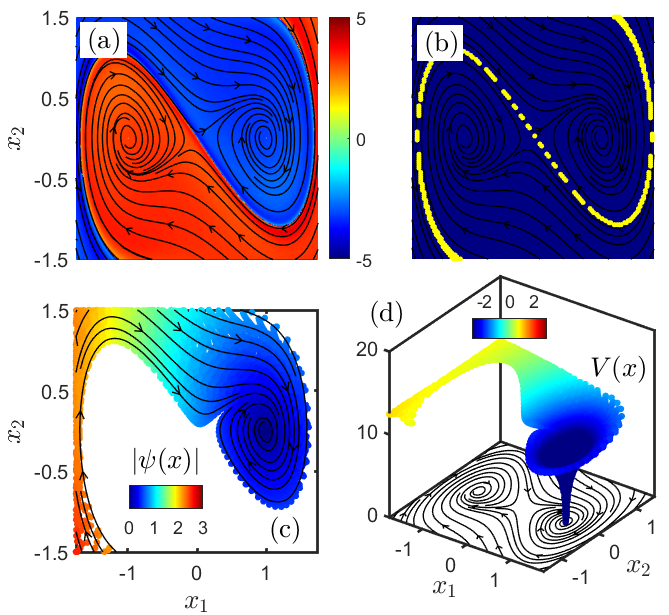}
    \caption{Duffing Oscillator: (a) eigenfunction (real) for $\lambda = 0.78$ at the (0,0); (b) zero level set representing the stable manifold; (c) magnitude of the 
    eigenfunction (complex) for $\lambda = -0.25 \pm 1.39j$ at $(1,0)$; (d) Lyapunov function obtained  from (c)}
    \label{fig:duffing_eigfun}  
\end{figure}



\noindent {\bf Two Link Robotic Arm:} Consider the following Euler-Lagrange dynamics representing a 2-link manipulator:
\begin{equation}\label{eq:EL}
    \bM(\bq)\ddot{\bq} + \mathbf{C}(\bq,\dot{\bq})\dot{\bq} + \bG(\bq) = \bB\dot{\bq} 
\end{equation}
where $\bq\in \mathbb{R}^2$ represents the generalized coordinates of the manipulator. Specifically, we take 
\[
   \bM(\bq) = \left[\begin{array}{cc}
       2\cos(q_2) + 8.33 &  \cos(q_2) + 0.33\\
       \cos(q_2) + 0.33 & 0.33
   \end{array}\right]
\]
\[
\mathbf{C}(\bq,\dot{\bq}) = \left[\begin{array}{cc}
       -2\dot{q}_2\sin(q_2) &  -\dot{q}_2\sin(q_2)\\
       \dot{q}_1\sin(q_2)& 0
   \end{array}\right]
\]
\[
   \bG(\bq) = \left[\begin{array}{c}
       50\sin(q_1) + 5\sin(q_1 + q_2)\\
       5\sin(q_1 + q_2)
   \end{array}\right]
\]
and $\bB = diag [5.5, \quad 0.001]$, where $diag$ represents a diagonal matrix. We take the $4$-dimensional state to be $\bx = [q_1,q_2,\dot{q}_1,\dot{q}_2]$, and rewrite the dynamics \eqref{eq:EL} in standard form as $
    \dot{\bx} = \mathbf{f}(\bx).
$
The linearized system about the stable equilibrium $\bx=0$ has complex eigenvalues $\lambda_{1,2} = -0.23 \pm 2.29j$ and $\lambda_{3,4} = -0.32 \pm 5.32j$, thus leading to complex eigenfunctions. We pick a domain $[-\frac{\pi}{12}, \frac{\pi}{12}]^4$ over which we compute the path integrals and create a dataset $\mathcal{D}'$ \textcolor{black}{as described in Subsection \ref{subsec:DNN}}. This dataset, along with the sum of losses \eqref{eq:loss1} and \eqref{eq:loss2}, is then used to train a multi-layer perceptron network (MLP) with a sinusoidal activation function. The MLP has 3 hidden layers, each with 128 neurons. The input layer is of size 4, and the output layer has a size 2, corresponding to the real and imaginary parts of the eigenfunction being learned. Fig. \ref{fig:eigfun_manipulator} shows the magnitude and phase of the complex eigenfunction along the system trajectory starting at random initial conditions within the domain. It can be seen that the magnitude of the eigenfunction goes to zero along the stable trajectory.
\begin{figure}
    \centering
    \includegraphics[width = \linewidth]{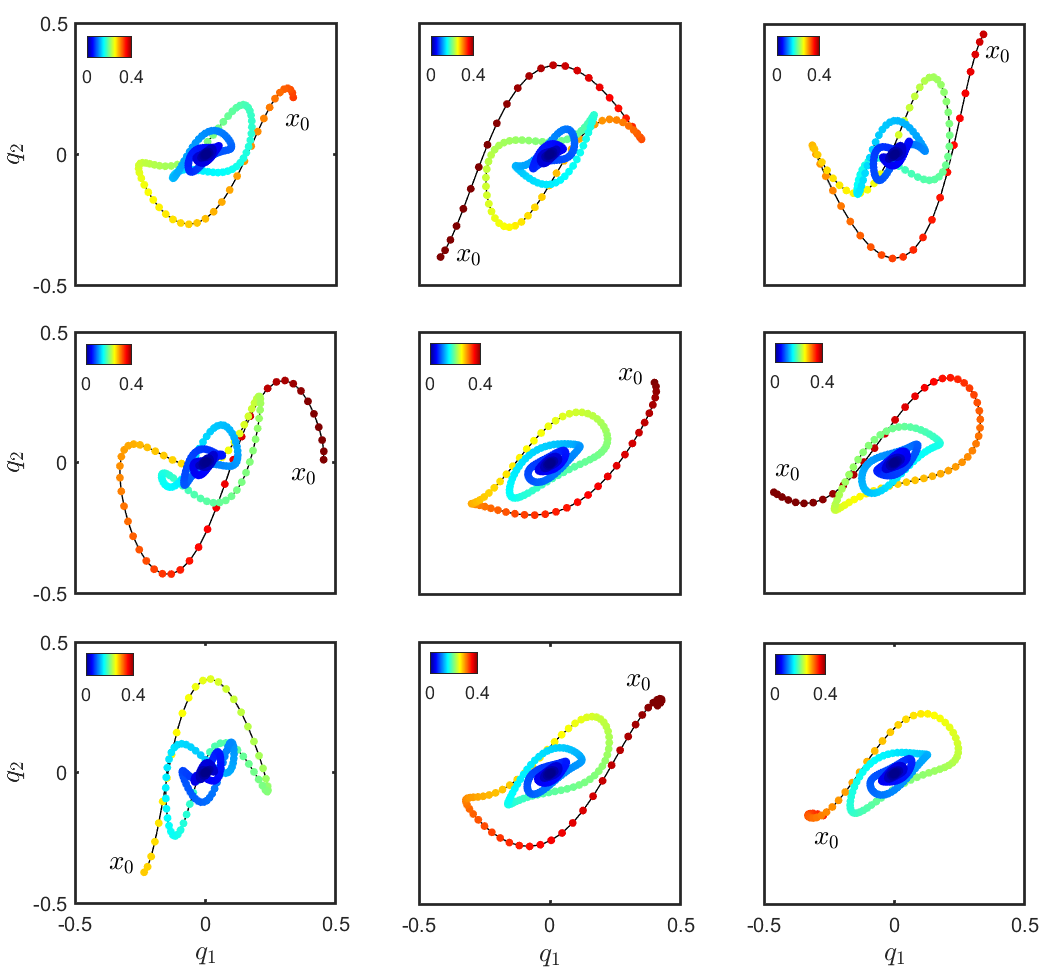}
    \caption{Magnitude of the \textcolor{black}{eigenfunction corresponding to stable eigenvalue} $\lambda = -0.23 \pm 2.29j$ along the trajectories of the system starting from random initial conditions}
    \label{fig:eigfun_manipulator}
\end{figure}

\section{Conclusions}
We provide a novel approach for the computation of principal eigenfunctions of the Koopman operator based on the path-integral formula. Furthermore, the path-integral formula is used to formulate the DNN-based approach for computing the eigenfunctions. Simulation results show that the path-integral-based approach accurately approximates the principal eigenfunctions of systems with complex dynamics. We demonstrate the applications of eigenfunctions for the computation of stable/unstable manifolds and the Lyapunov function. Simulation results involving analytical examples, duffing oscillator, and two links robotic arm are presented to show the efficacy of the developed framework. Future research will focus on a data-driven approach for the computation of principal eigenfunctions and its extension to discrete-time dynamical systems. 

\bibliographystyle{IEEEtran}
\bibliography{references,ref1,Umesh_ref}

\begin{thebibliography}{10}
\providecommand{\url}[1]{#1}
\csname url@samestyle\endcsname
\providecommand{\newblock}{\relax}
\providecommand{\bibinfo}[2]{#2}
\providecommand{\BIBentrySTDinterwordspacing}{\spaceskip=0pt\relax}
\providecommand{\BIBentryALTinterwordstretchfactor}{4}
\providecommand{\BIBentryALTinterwordspacing}{\spaceskip=\fontdimen2\font plus
\BIBentryALTinterwordstretchfactor\fontdimen3\font minus
  \fontdimen4\font\relax}
\providecommand{\BIBforeignlanguage}[2]{{%
\expandafter\ifx\csname l@#1\endcsname\relax
\typeout{** WARNING: IEEEtran.bst: No hyphenation pattern has been}%
\typeout{** loaded for the language `#1'. Using the pattern for}%
\typeout{** the default language instead.}%
\else
\language=\csname l@#1\endcsname
\fi
#2}}
\providecommand{\BIBdecl}{\relax}
\BIBdecl

\bibitem{mezic2020spectrum}
I.~Mezi{\'c}, ``Spectrum of the {K}oopman operator, spectral expansions in
  functional spaces, and state-space geometry,'' \emph{Journal of Nonlinear
  Science}, vol.~30, no.~5, pp. 2091--2145, 2020.

\bibitem{huang2020data}
B.~Huang, X.~Ma, and U.~Vaidya, ``Data-driven nonlinear stabilization using
  koopman operator,'' in \emph{The Koopman Operator in Systems and
  Control}.\hskip 1em plus 0.5em minus 0.4em\relax Springer, 2020, pp.
  313--334.

\bibitem{huang2019optimal}
------, ``Optimal quadratic regulation of nonlinear system using koopman
  operator,'' in \emph{2019, American Control Conference}.\hskip 1em plus 0.5em
  minus 0.4em\relax IEEE, 2019.

\bibitem{huang2022convex}
B.~Huang and U.~Vaidya, ``A convex approach to data-driven optimal control via
  perron-frobenius and koopman operators,'' \emph{IEEE Transactions on
  Automatic Control}, 2022.

\bibitem{korda2018convergence}
M.~Korda and I.~Mezi{\'c}, ``On convergence of extended dynamic mode
  decomposition to the {K}oopman operator,'' \emph{Journal of Nonlinear
  Science}, vol.~28, no.~2, pp. 687--710, 2018.

\bibitem{moyalan2023data}
J.~Moyalan, H.~Choi, Y.~Chen, and U.~Vaidya, ``Data-driven optimal control via
  linear transfer operators: A convex approach,'' \emph{Automatica}, vol. 150,
  p. 110841, 2023.

\bibitem{sinha2019computation}
S.~Sinha, U.~Vaidya, and E.~Yeung, ``On computation of koopman operator from
  sparse data,'' in \emph{2019 American Control Conference (ACC)}.\hskip 1em
  plus 0.5em minus 0.4em\relax IEEE, 2019, pp. 5519--5524.

\bibitem{korda2018linear}
M.~Korda and I.~Mezi{\'c}, ``Linear predictors for nonlinear dynamical systems:
  Koopman operator meets model predictive control,'' \emph{Automatica},
  vol.~93, pp. 149--160, 2018.

\bibitem{vaidya_Koopmanspectrumcdc2022}
U.~Vaidya, ``Spectral analysis of {K}oopman operator and nonlinear optimal
  control,'' \emph{IEEE Control and Decision Conference.}, 2022.

\bibitem{kaiser2018sparse}
E.~Kaiser, J.~N. Kutz, and S.~L. Brunton, ``Sparse identification of nonlinear
  dynamics for model predictive control in the low-data limit,''
  \emph{Proceedings of the Royal Society A}, vol. 474, no. 2219, p. 20180335,
  2018.

\bibitem{brunton2016koopman}
S.~L. Brunton, B.~W. Brunton, J.~L. Proctor, and J.~N. Kutz, ``Koopman
  invariant subspaces and finite linear representations of nonlinear dynamical
  systems for control,'' \emph{PloS one}, vol.~11, no.~2, p. e0150171, 2016.

\bibitem{matavalam2020data}
A.~R.~R. Matavalam, U.~Vaidya, and V.~Ajjarapu, ``Data-driven approach for
  uncertainty propagation and reachability analysis in dynamical systems,'' in
  \emph{2020 American Control Conference (ACC)}.\hskip 1em plus 0.5em minus
  0.4em\relax IEEE, 2020, pp. 3393--3398.

\bibitem{sinha2016operator}
S.~Sinha, U.~Vaidya, and R.~Rajaram, ``Operator theoretic framework for optimal
  placement of sensors and actuators for control of nonequilibrium dynamics,''
  \emph{Journal of Mathematical Analysis and Applications}, vol. 440, no.~2,
  pp. 750--772, 2016.

\bibitem{korda2020optimal}
M.~Korda and I.~Mezic, ``Optimal construction of koopman eigenfunctions for
  prediction and control,'' \emph{IEEE Transactions on Automatic Control},
  2020.

\bibitem{lan2013linearization}
Y.~Lan and I.~Mezi{\'c}, ``Linearization in the large of nonlinear systems and
  {K}oopman operator spectrum,'' \emph{Physica D: Nonlinear Phenomena}, vol.
  242, no.~1, pp. 42--53, 2013.

\bibitem{SarangACC2023}
S.~Sutavani, B.~Umathe, and V.~Umesh, ``Small gain theorem and l2 gain
  computation in large using koopman spectrum,'' \emph{American Control
  Conference}, 2023.

\bibitem{mauroy2016global}
A.~Mauroy and I.~Mezi{\'c}, ``Global stability analysis using the
  eigenfunctions of the koopman operator,'' \emph{IEEE Transactions on
  Automatic Control}, vol.~61, no.~11, pp. 3356--3369, 2016.

\bibitem{schlosser2022sparsity}
C.~Schlosser and M.~Korda, ``Sparsity structures for koopman and
  perron--frobenius operators,'' \emph{SIAM Journal on Applied Dynamical
  Systems}, vol.~21, no.~3, pp. 2187--2214, 2022.

\bibitem{umathe2022reachability}
B.~Umathe, D.~Tellez-Castro, and U.~Vaidya, ``Reachability analysis using
  spectrum of koopman operator,'' \emph{IEEE Control Systems Letters}, vol.~7,
  pp. 595--600, 2022.

\bibitem{arnold2012geometrical}
V.~I. Arnold, \emph{Geometrical methods in the theory of ordinary differential
  equations}.\hskip 1em plus 0.5em minus 0.4em\relax Springer Science \&
  Business Media, 2012, vol. 250.

\bibitem{lusch2018deep}
B.~Lusch, J.~N. Kutz, and S.~L. Brunton, ``Deep learning for universal linear
  embeddings of nonlinear dynamics,'' \emph{Nature communications}, vol.~9,
  no.~1, pp. 1--10, 2018.

\bibitem{folkestad2020extended}
C.~Folkestad, D.~Pastor, I.~Mezic, R.~Mohr, M.~Fonoberova, and J.~Burdick,
  ``Extended dynamic mode decomposition with learned koopman eigenfunctions for
  prediction and control,'' in \emph{2020 American Control Conference
  (ACC)}.\hskip 1em plus 0.5em minus 0.4em\relax IEEE, 2020, pp. 3906--3913.

\bibitem{bollt2018matching}
E.~M. Bollt, Q.~Li, F.~Dietrich, and I.~Kevrekidis, ``On matching, and even
  rectifying, dynamical systems through koopman operator eigenfunctions,''
  \emph{SIAM Journal on Applied Dynamical Systems}, vol.~17, no.~2, pp.
  1925--1960, 2018.

\end{thebibliography}


\end{document}